\numberwithin{equation}{section}
\newtheorem{theorem}{Theorem}[section]
\let\c@example\c@theorem\let\c@example\c@theorem\makeatother
\let\c@definition\c@theorem\let\c@definition\c@theorem\makeatother
\let\c@lemma\c@theorem\let\c@lemma\c@theorem\makeatother
\newcommand{\Partial}[2]{\frac{\partial #1}{\partial #2}}
\newcommand{\F}[1]{\mathds{ #1 }}\newcommand{\C}[1]{\mathcal{ #1 }}
\newcommand{\Vector}[3]{ \begin{pmatrix} #1 \\ #2 \\ #3 \end{pmatrix}}
\newcommand{\MTwo}[4]{\begin{pmatrix} #1 & #2 \\ #3 & #4 \end{pmatrix}}
\newcommand{\norm}[1]{\left\lVert #1 \right\rVert}\renewcommand{\sfrac}[2]{^{#1}\!\!/\!_{#2}}
\newcommand{\IP}[2]{\left\langle #1,#2 \right\rangle}
\newcommand{\op}[1]{\operatorname{#1}}\newcommand{\overtext}[2]{\stackrel{\text{#1}}{#2}}
\newcommand{\splitln}[4]{\left\{\begin{array}{cc} #1 & #2 \\ #3 & #4\end{array}\right.}
\newcommand{\Rn}{\F R^n}\renewcommand{\P}{\F{P}}\newcommand{\E}{\F{E}}\newcommand{\1}{\F{1}}
\newcommand{\tab}{\indent\indent}\newcommand{\Tab}{\newline\tab}
\renewcommand{\circle}[1]{\raisebox{.5pt}{\rm{\textcircled{\raisebox{-.9pt} {#1}}}}}
\newcommand{\intbar}{-\hspace{-10.5pt}\int}\newcommand{\todo}[1]{{\color{red}#1}}
\DeclareMathOperator{\st}{\;s.t.\;}\DeclareMathOperator{\as}{\;a.s.\;}\renewcommand{\epsilon}{\varepsilon}
\DeclareMathOperator*{\Union}{\bigcup}\DeclareMathOperator{\union}{\cup}
\DeclareMathOperator*{\Intersect}{\bigcap}\DeclareMathOperator{\intersect}{\cap}
\renewcommand{\bar}{\overline}\renewcommand{\hat}{\widehat}\renewcommand{\tilde}{\widetilde}
\def\lesssim{\stackrel{<}\sim}
\def\gtrsim{\stackrel{>}\sim}
\def\a{{\bf a}}
\def\c{{\bf c}}
\def\w{{\bf w}}
\def\x{{\bf x}}
\def\x'{{\bf x'}}
\def\x''{{\bf x''}}
\def\u{{\bf u}}
\def\v{{\bf v}}
\def\y{{\bf y}}
\def\z{{\bf z}}
\def\t{{\bf t}}
\def\e{{\bf e}}
\def\1{{\bf 1}}
\def\0{{\bf 0}}
\def\ie{{i.e.}}
\def\Ie{{I.e.}}
\def\RR{{\mathbb R}}
\def\RRk{{\RR^k}}
\def\l{{\ell}}
\newcommand{\bea}{\begin{eqnarray}}
\newcommand{\eea}{\end{eqnarray}}
\newcommand{\bean}{\begin{eqnarray*}}
\newcommand{\eean}{\end{eqnarray*}}
\newcommand{\beq}{\begin{equation}}
\newcommand{\eeq}{\end{equation}}
\newcommand{\bac}{\begin{array}{c}}
\newcommand{\ball}{\begin{array}{ll}}
\newcommand{\ea}{\end{array}}
\newcommand{\bbB}{{\mathbb B}}
\newcommand{\bbR}{{\mathbb R}}
\newcommand{\aA}{{\mathcal A}}
\newcommand{\EE}{{\mathcal E}}
\newcommand{\HH}{{\mathcal H}}
\newcommand{\II}{{\mathcal I}}
\newcommand{\OO}{{\mathcal O}}
\def\ole{\preceq}
\def\oge{\succeq}
\def\wh{\widehat}
\def\wt{\widetilde}
\def\al{\alpha}
\def\bt{\beta}
\def\ga{\gamma}
\def\Ga{\Gamma}
\def\bGa{{\mbox{\bm$\Ga$}}}
\def\de{\delta}
\def\bde{{\mbox{\bm$\de$}}}
\def\bga{{\mbox{\bm$\ga$}}}
\def\De{\Delta}
\def\ep{\varepsilon}
\def\ka{\kappa}
\def\la{\lambda}
\def\La{\Lambda}
\def\om{\omega}
\def\bom{{\mbox{\bm$\om$}}}
\def\sbom{{\mbox{\scriptsize\bm$\om$}}}
\def\Om{\Omega}
\def\si{\sigma}
\def\ta{\tau}
\def\th{\theta}
\def\vp{\varphi}
\def\bvp{{\mbox{\bm$\varphi$}}}
\def\ze{\zeta}
\def\tze{\widetilde{\zeta}}
\def\tz{\widetilde{z}}
\def\dia{{\rm diam}}
\def\dst{{\rm dist}}
\def\ov{\overline}
\def\rest{\Biggm|}
\def\intl{\int\limits}
\def\tosta{\stackrel{\ast}{\longrightarrow}}
\def\extom{{\mbox{{\bm$\om$}$_s^{\ast}(A,N)$}}}
\def\sextom{{\mbox{{\scriptsize\bm$\om$}{\scriptsize $_s^{\ast}(A,N)$}}}}
\def\UnuN{U^{\nu_{s,N}^{\ast}}}
\def\Rd{\bbR^d}
\def\Rdp{\bbR^{d^\prime}}
\def\dbl#1#2{\substack{#1\\#2}}
\def\sumj{\sideset{}{^{(j)}}\sum}
\def\sumjo{\sideset{}{_1^{(j)}}\sum}
\def\sumjt{\sideset{}{_2^{(j)}}\sum}
\newcommand{\fin}{\hfill\raisebox{-0.5ex}{\mbox{$\,\rule{2mm}{2mm}\,$}}\medskip}
\numberwithin{equation}{section}
\numberwithin{theorem}{section}
\def\({\left(} 
\def\){\right)}
\def\lb{\left[}
\def\rb{\right]}
\def\bl{\left\{}
\def\br{\right\}}
\def\mr{\right|}
\def\nl{\left\|}
\def\nr{\right\|}
\author{}
\title{ }
\def\ml{\left|}
\begin{document}

\title{On best uniform approximation of finite sets by linear combinations of real valued functions using linear programming.}

\author{Steven B. Damelin \thanks{Department of Mathematics, University of Michigan, 530 Church Street, Ann Arbor, MI 48109, USA: damelin@umich.eduj} and Michael Werman\thanks{Department of Computer Science, The Hebrew University, 91904, Jerusalem, Israel: michael.werman@mail.huji.ac.il}.}

\maketitle
 
\begin{abstract}
In this paper, we study the problem of best uniform approximation of finite sets by linear combinations of real valued functions using linear programming. Our study concerns the analysis of the best approximation problem:
\[
\displaystyle
\min_{\alpha\in \mathbb R^m}\max_{1\leq i\leq n}\left|y_i -\sum_{j=1}^m \alpha_j \Gamma_j ({\bf x}_i)
\right|.
\]
Here: $\Gamma:=\left\{\Gamma_1,...,\Gamma_m\right\}$ is a list of functions where for each $1\leq j\leq m$, $\Gamma_j:\Delta \rightarrow \mathbb R$ with 
$\Delta$ a set of evaluation points 
$\left\{{\bf x_1},...,{\bf x_n}\right\}$. $\left\{y_1,...,y_n\right\}$ is a set of real values and 
$\alpha:=(\alpha_1,...,\alpha_m)$ with $\alpha_j\in \mathbb R,\, 1\leq j\leq m$. 
\newline
\newline
Keywords: Best uniform approximation, Linear Programming, Optimization, Point, Set 
\end{abstract}

\section{Introduction}
Classical approximation theory is concerned with how functions can best be approximated by simpler functions for example in characterizing errors of approximation in some well defined sense. Note that what is meant by best and simpler will depend on the application, metrics and spaces over which the approximation takes place. See for example \cite{DRW,DE,De,De1,M,L,L1} and the many references therein for an interesting qualitative and quantitative perspective on this beautiful subject. To gain some perspective, a classical result in approximation theory is Jackson's theorem which tells us that the error of best uniform approximation
of functions $f:(a,b)\to \mathbb R$ with $r\geq 1$ derivatives by the space of polynomials $\Pi_n$ of degree at most $n\geq 1$
is of order $\frac{1}{n^r}$. Here, $-\infty<a<b<\infty$. More precisely, for fixed $a,b,r,f$ as above,
\[
{\rm min}_{p\in \Pi_n}{\rm max}_{x\in (a,b)}|f(x)-p(x)|\leq \frac{C}{n^r}
\]
where $C$ is a positive constant depending on $f,a,b,r$.  
\medskip

Results of this kind represent popular areas of research for different spaces of functions over different domains approximated by elements of numerous linear and non-linear spaces with different notions of error measure. 
See for example \cite{DRW,DE,De,De1,M,L,L1} and the many references cited therein. On the other hand, linear programming \cite{V89,dual,lp} is a popular technique for the optimization of a linear objective function, subject to linear equality and linear inequality constraints.
The idea of this paper is to apply linear programming to the classical approximation problem of best uniform approximation. More specifically, we analyze the problem of  best uniform approximation of finite sets by linear combinations of real valued functions using linear programming. 
This amounts to  using linear programming, to analyze in various ways the best approximation problem of points rather than functions. 
The sets over which we work can be quite arbitrary. Henceforth, the same symbol for function, set, evaluation, and value may denote the same or different symbol at different times. The context will be clear.
\medskip

To make our idea more precise, we begin with:

\section{Our Setup}
Let $\Gamma:=\left\{\Gamma_1,...,\Gamma_m\right\}$ be a list of functions where for each $1\leq j\leq m$, $\Gamma_j:\Delta \rightarrow {\mathbb R}$ with $\Delta$ a set of evaluation points 
$\left\{{\bf x_1},...,{\bf x_n}\right\}$. $\left\{y_1,...,y_n\right\}$ is a set of real values. 
\medskip

Affine functions $(1,x,y,\dots,z)$, polynomials: $(1,x,x^2,x^3...)$, $(1,x,y,\dots,x^2,xy,\dots)$ and the random list $(1,xy^2,e^x,cos(y-x)\dots)$ are all examples of possible lists $\Gamma$. Note that the sets $\Delta$ are arbitrary.
\medskip

With $\alpha:=(\alpha_1,...,\alpha_m)$, $\alpha_j\in \mathbb R,\, 1\leq j\leq m$, we consider now the following best uniform approximation problem: 

\begin{equation}
\displaystyle
\min_{\alpha\in \mathbb R^m}\max_{1\leq i\leq n}\left|y_i -\sum_{j=1}^m \alpha_j \Gamma_j ({\bf x}_i)
\right|.
\end{equation}
\bigskip
$(2.1)$ is a convex optimization problem in the coefficients $\alpha$. 
\medskip
Indeed, checking easily, we have that for $0\le t \le 1$ 
$
\\
\left|y_i -\sum_{j=1}^m (t\alpha_j+(1-t)\beta_j) \Gamma_j ({\bf x_i})
\right|_\Delta\le
\left|ty_i -\sum_{j=1}^m t\alpha_j \Gamma_j ({\bf x_i})
\right|_\Delta+\left|(1-t)y_i -\sum_{j=1}^m (1-t)\beta_j \Gamma_j ({\bf x_i})
\right|_\Delta
$\\
$=t\left|y_i-\sum_{j=1}^m \alpha_j \Gamma_j ({\bf x_i})
\right|_\Delta+(1-t)\left|y_i -\sum_{j=1}^m \beta_j \Gamma_j ({\bf x_i})
\right|_\Delta\le d$. 
\bigskip
whenever, 
\[
\left|y_i -\sum_{j=1}^m \alpha_j \Gamma_j ({\bf x_i})
\right|_\Delta\le d
\] and 
\[
\left|y_i -\sum_{j=1}^m \beta_j \Gamma_j ({\bf x_i})
\right|_\Delta\le d.
\]
Here, for ease of notation, we have written 
$\max_{1\leq i\leq n}\left|f({\bf x_i}\right|$ as $|f({\bf x})|_\Delta$.
\bigskip

The objective of this paper is to study $(2.1)$ in various ways.   
\medskip

\section{Analysis of $(2.1)$ via linear programming}
In this section, we begin our analysis of $(2.1)$ via linear programming. Consider the linear program with the $m+1$ variables $\alpha_j$, $z\in \mathbb R$
and $2n$ constraints:

\begin{equation*}
\setlength\arraycolsep{1.5pt}
  \begin{array}{l@{\quad} r c r c r}
    \min          & z    \\
    \mathrm{s.t.} &  z & \ge &  \sum_{j=1}^m \alpha_j \Gamma_j ({\bf x}_1)-y_1\\
                  &   z & \ge & y_1 -\sum_{j=1}^m \alpha_j \Gamma_j ({\bf x}_1) \\
                  &  & \vdots\\
                 &  z& \ge &  \sum_{j=1}^m \alpha_j \Gamma_j ({\bf x}_n)-y_n \\
                  &    z & \ge & y_n -\sum_{j=1}^m \alpha_j \Gamma_j ({\bf x}_n) \\
  \end{array}
\end{equation*}
\medskip

If we write the above as the linear program

\begin{equation*}
\setlength\arraycolsep{1.5pt}
  \begin{array}{l@{\quad} r c r c r}
 \min z \\ \mathrm{s.t.}\\  &
\begin{pmatrix}
    \Gamma_1(x_1) & \Gamma_2(x_1) &\cdots& \Gamma_m(x_1)&-1\\
   - \Gamma_1(x_1) & -\Gamma_2(x_1) &\cdots& -\Gamma_m(x_1)&-1\\&&\vdots\\
    -\Gamma_1(x_n) & -\Gamma_2(x_n) &\cdots& -\Gamma_m(x_n)&-1
\end{pmatrix}
\begin{pmatrix}
    \alpha_1\\ \alpha_2 \\ \vdots \\ z
\end{pmatrix}
\le 
\begin{pmatrix}
    y_1\\ -y_1 \\ \vdots \\ -y_n
\end{pmatrix}
\end{array}
\end{equation*}
we obtain our first theorem.
\begin{theorem}
There  are $m+1$ ${\bf x}$'s all with the same  maximum discrepancy (discrepancy being the optimal $z$ solution.)  
\label{t:theorem1}
\end{theorem}

\begin{proof}
We look at each pair of odd and even adjacent rows, odd rows are where the interpolant is less than the value and even where they are above. Given we have $m+1$ columns and $2n$ rows there can be at most one of the pair $2i-1$, $2i$ of rows with non zero $\alpha_j$ an optimal solution, unless there is an interpolation with $0$ error. At an  optimal solution the inequalities are equality's and
this gives the $m+1$ sought after ${\bf x}'s$, all with the same maximum discrepancy.
\end{proof}

Our dual linear program is readily seen to be

\begin{equation*}
\setlength\arraycolsep{1.5pt}
  \begin{array}{l@{\quad} r c r c r}
    \min (y_1, -y_1, \cdots, -y_{n})(\beta_1,\dots,\beta_{2n}) \\\mathrm{s.t.}\\  
\begin{pmatrix}
    \Gamma_1(x_1) & -\Gamma_1(x_1) &\cdots& -\Gamma_1(x_n)\\
     \Gamma_2(x_1) & -\Gamma_2(x_1) &\cdots& -\Gamma_2(x_n)\\ &&\vdots \\
    \Gamma_m(x_1) & -\Gamma_m(x_1) &\cdots& -\Gamma_m(x_n)\\
    1&1&\cdots&1
\end{pmatrix}
\begin{pmatrix}
    \beta_1\\ \beta_2 \\ \vdots \\ \beta_{2n}
\end{pmatrix}
=
\begin{pmatrix}
    0\\ 0 \\ \vdots \\ 0\\ 1
\end{pmatrix}\\
\beta_i\ge 0.
\end{array}
\end{equation*}

With this in mind, let $d$ be  minimal discrepency where  $\Gamma=\sum \a_i \Gamma_i$ is
the optimal interpolator and $b_i$ optimal nonzero $\beta_i$, which are non zero iff row $i$ of the primal constraints is an equality. Matrix row and column manipulation give: 

\begin{itemize}
    \item[a] $\sum\limits_{i~odd}b_i y_\frac{i+1}{2}-\sum\limits_{j~ even}b_j y_\frac{j}{2}=d$, by strong duality.
    \item[b] ${i~odd}\rightarrow \Gamma({\bf x_\frac{i+1}{2}})-y_\frac{i+1}{2}=d$, given that for an optimal solution inequality becomes equality.
    \item[c] ${i~even}\rightarrow y_{\frac{i}{2}}-\Gamma({\bf x_\frac{i}{2}})=d$, given that for an optimal solution inequality becomes equality.
    \item[d] $\sum b_i=1$, from the last row of the dual matrix,
    \item[e] $\forall k, ~~ \sum\limits_{i~odd}b_i \Gamma_k ({\bf x_\frac{i+1}{2}})-\sum\limits_{j~ even}b_j \Gamma_k ({\bf x_\frac{j}{2}})=0$, from a row of the dual matrix,
    \item[f] 
    $\forall k, ~~ a_k\sum\limits_{i~odd}b_i \Gamma_k ({\bf x_\frac{i+1}{2}})-a_k\sum\limits_{j~ even}b_j \Gamma_k (x_\frac{j}{2})=0$, from a row of the primal matrix.
    \item[g] $\sum\limits_{i~odd}b_i \Gamma ({\bf x_\frac{i+1}{2}})-\sum\limits_{j~ even}b_j \Gamma ({\bf x_\frac{j}{2}})=0$.
    \item[h] $\sum\limits_{i~odd}b_i (\Gamma ({\bf x_\frac{i+1}{2}})- y_\frac{i+1}{2})+\sum\limits_{j~ even}b_j (y_\frac{j}{2}-\Gamma ({\bf x_\frac{j}{2}}))=d$.
     \end{itemize}
\medskip

We then have our second theorem given by:
\medskip
\begin{theorem}
If  $\Gamma_1$ is the constant $\mathds{1}$, then there is at least one overshoot and one undershoot with optimal discrepancy.
\label{t:theorem2}
\end{theorem}
\begin{proof}
Adding and subtracting rows 1 and 2n in our dual matrix
gives 
\[
\beta_1+\beta_3+\beta_5\cdots+\beta_{2n-1} =\frac{1}{2} \]
\[
\beta_2+\beta_4+\beta_6 \cdots +\beta_{2n}=\frac{1}{2}\]
which gives at least one overshoot and at least one undershoot. Even rows 
are above and odd rows below. 
Geometrically, we can use $\mathds{1}$ to move the solution up or down so that it is equidistant from extremal points. 
\end{proof}
We remark that we do not handle the case when $\Gamma$ and $\Delta$ produce a low rank matrix in the primal, that is, when there is an exact interpolation. 

\subsection{Weighted approximation and weighted version of (2.1)}

In classical approximation theory, errors of weighted approximation are often studied for different classes of real valued functions over domains in $\mathbb R^m$ where the function grows without bound for a large argument.
If one aims to speak to errors of best approximation of such functions, one should typically work with weighted metrics where the weight dampens the behavior of the function for a large argument. A typical example of such a scenario would be the following. See for example \cite{L,L1}. Suppose we are given continuous functions 
$f:\mathbb R\to \mathbb R$. Suppose that $f$ grows without bound for large argument. Then with even weights, $\mu:\mathbb R\to (0,\infty)$ of suitable smoothness and fast enough decrease for large argument with $|f\mu|$ having limit 0 for large argument
\[
{\rm inf}_{p\in \Pi_n}{\rm sup}_{x\in \mathbb R}|(f(x)-p(x))\mu(x)|
\]
has limit $0$ as $n$ increases without bound and  if $f$ is smooth enough
\[
{\rm inf}_{p\in \Pi_n}{\rm sup}_{x\in \mathbb R}|(f(x)-p(x))\mu(x)|\leq Cg(n)
\]
for an explicit sequence $g(n)$ depending on $\mu$ and decreasing to $0$ for large $n$. 
$C$ is a positive constant depending on $f$.
Similar problems make sense and are studied for non-even $\mu$ and when $f$ is real valued over a compact domain in $\mathbb R^m$ and has singularities on the boundary.

An important observation regarding $(2.1)$ is that it holds for a fixed weighted setting. That is $(2.1)$ has a natural weighted analog for weighted best uniform approximation of finite sets by weighted linear combinations of real valued functions. 
\medskip
Indeed we have:
\medskip
\begin{theorem}
Suppose we are given a non-negative set of weights $[\mu_1,...,\mu_n]$.
\medskip
Consider the best approximation problem: 

\beq
\displaystyle
\min_{\alpha\in \mathbb R^m} \max_{1\leq i\leq n}\left( \mu_i\left|{\bf y_i} -\sum_{j=1}^m \alpha_j \Gamma_j (s)\right|
\right)
\eeq

Then the analysis of $(3.1)$ including natural analogues of Theorem~\ref{t:theorem1} and Theorem~\ref{t:theorem2} follow in the same way as for $(2.1)$. 
\end{theorem}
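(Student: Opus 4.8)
The plan is to reduce the weighted problem $(3.1)$ to the unweighted problem $(2.1)$ by absorbing the weights into both the data and the functions, and then to transport Theorem~\ref{t:theorem1} and Theorem~\ref{t:theorem2} across this reduction, checking at each stage that their proofs survive the substitution.

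First I would rewrite the weighted residual at each index $i$ as
\[
\mu_i\left|y_i - \sum_{j=1}^m \alpha_j \Gamma_j(x_i)\right| = \left|\tilde y_i - \sum_{j=1}^m \alpha_j \tilde\Gamma_j(x_i)\right|,
\]
where $\tilde y_i := \mu_i y_i$ and $\tilde\Gamma_j(x_i) := \mu_i \Gamma_j(x_i)$. Thus $(3.1)$ is literally an instance of $(2.1)$ with data $\{\tilde y_i\}$ and function list $\{\tilde\Gamma_j\}$. In particular the convexity check of Section $2$, the primal linear program, its row/column shape, and the dual program all carry over verbatim to the transformed data: the primal matrix is just the original one with each adjacent pair of rows scaled by $\mu_i$ and with right-hand side $\mu_i y_i$ in place of $y_i$.

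The analogue of Theorem~\ref{t:theorem1} is then immediate, since its counting argument ($m+1$ columns, $2n$ rows, and at an optimum at most one of each odd/even pair active) depends only on the shape of the primal and not on its entries; it yields $m+1$ evaluation points all attaining the same optimal weighted discrepancy. The step requiring real care, and the one I expect to be the only genuine obstacle, is the analogue of Theorem~\ref{t:theorem2}. Taking $\Gamma_1 = \mathds{1}$, the transformed first function is $\tilde\Gamma_1(x_i) = \mu_i$, which is no longer constant, so the dual row that previously read $(1,-1,1,-1,\dots)$ now reads $(\mu_1,-\mu_1,\mu_2,-\mu_2,\dots)$. Consequently the clean split $\sum_{i\ \mathrm{odd}} b_i = \sum_{i\ \mathrm{even}} b_i = \tfrac12$ of Theorem~\ref{t:theorem2} is lost; what the weighted form of item (e) supplies is only the weighted balance $\sum_{i\ \mathrm{odd}} \mu_{(i+1)/2}\, b_i = \sum_{j\ \mathrm{even}} \mu_{j/2}\, b_j$, together with the normalization $\sum_i b_i = 1$ from item (d).

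To finish the Theorem~\ref{t:theorem2} analogue I would show these two relations still force at least one nonzero odd $b_i$ and at least one nonzero even $b_j$: if every even $b_j$ vanished, the weighted balance would force every odd term to vanish as well (all $\mu_i > 0$), contradicting $\sum_i b_i = 1$, and the odd case is symmetric. This recovers at least one overshoot and one undershoot carrying the optimal weighted discrepancy, the weights $\tfrac12$ of the unweighted statement being replaced here by the weighted balance. It remains to dispose of vanishing weights: if $\mu_i = 0$ then $\tilde\Gamma_j(x_i) = 0$ for all $j$ and $\tilde y_i = 0$, so both constraints for that index collapse to $z \ge 0$ and impose nothing; the arguments above are therefore run on the index set $\{\, i : \mu_i > 0 \,\}$, where positivity is exactly the hypothesis used. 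As in the unweighted case, the degenerate situation of an exact weighted interpolation, in which the primal matrix loses rank, is excluded.
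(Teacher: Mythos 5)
Your reduction is exactly the paper's: the published proof consists of the single remark that one should ``follow carefully the analysis for $(2.1)$ above and re scale,'' with a footnote specifying the rescaling $\mu_i\Gamma_j(\cdot)$ and $\mu_i y_i$ --- precisely your substitution $\tilde y_i:=\mu_i y_i$, $\tilde\Gamma_j(x_i):=\mu_i\Gamma_j(x_i)$. Where you go beyond the paper is in noticing that this rescaling does not literally preserve the hypothesis of Theorem~\ref{t:theorem2}: the transformed first function $\tilde\Gamma_1(x_i)=\mu_i$ is no longer the constant $\mathds{1}$, so the manipulation of the first and last dual rows no longer yields the clean split $\sum_{i\ \mathrm{odd}}\beta_i=\sum_{i\ \mathrm{even}}\beta_i=\tfrac12$. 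Your repair --- combining the weighted balance $\sum_{i\ \mathrm{odd}}\mu_{(i+1)/2}\,b_i=\sum_{j\ \mathrm{even}}\mu_{j/2}\,b_j$ from the first dual row with the normalization $\sum_i b_i=1$, and using positivity of the weights on the active index set to force a nonzero dual variable of each parity, hence by complementary slackness one tight constraint of each type --- is correct and supplies the step that the paper's one-line proof leaves implicit. Your separate disposal of indices with $\mu_i=0$ is likewise a detail the paper does not address. In short: same route as the paper, but your version actually verifies that the Theorem~\ref{t:theorem2} analogue survives the rescaling rather than asserting it.
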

\begin{proof}
Note that as in $(2.1)$, $(3.1)$ is a well defined convex optimization problem in the coefficients 
$\alpha$. It suffices to follow carefully the analysis for $(2.1)$ above and re scale.
\footnote{Simply re scale: $\mu_i\Gamma_j(.)$ and $\mu_iy_i$}.
\end{proof}

Following the work of Vaidya, \cite{V89}, our algorithms run in $O((n+m)^{1.5}mL)$ arithmetic operations in the worst case. Here $n$ is the number of constraints, $m$ is the number of variables, and $L$ is the number of bits. There are many packages \cite{lp} that can handle millions of $m,n$. 
\bigskip

{\bf Remark}\, An important idea in best approximation is that of equioscillation. 
The classical Chebyshev equioscillation theorem, see for example \cite{DRW,De1,M,L} is the following:
\bigskip

Let $f:[a,b]\to \mathbb R$, $-\infty<a<b<\infty$ be continuous.
Then a polynomial $p*\in \Pi_n$ is a best approximation to $f$ satisfying 
\[
{\rm min}_{p\in \Pi_n}\max_{x\in [a,b]}|f(x)-p(x)|=\max_{x\in [a,b]}|f(x)-p*(x)|
\]
exists if there exist $n+2$ points $\left\{x_1,..., x_{n+2}\right\}$
with $a\leq x_1<...<x_{n+2}\leq b$ such that 
\beq
f(x_i)-p*(x_i)=w(-1)^i{\max}_{[a,b]}|f(x)-p*(x)|,\, w=\pm 1.
\label{t:equioc}
\eeq
In this remark, we show how the the ideas in Theorem 3.1 and Theorem 3.3 can be used to constructing a case of failure of equioscillation and to establishing the known fact that polynomials equioscillate. 
\bigskip
\begin{itemize}
\item (A) {\bf A case of failure of equioscillation}; We argue as follows.
\medskip
If the optimal discrepancy  for $\Gamma,{\bf x},y$ is achieved at a ${\bf x_j}$ where
$\sum_{j=1}^m \alpha_j \Gamma_j ({\bf x_j})-{\bf y_i}=d$
 we define 
  \[
\begin{array}{llll}
     &\Gamma_k'({\bf x}) &=&  \begin{array} {cc}
     -\Gamma_k& {\bf x}={\bf x_j}\\
     \Gamma_k&otherwise
\end{array}\\
&     y'&=& \begin{array} {cc}  -y_j &j\\
     y &otherwise
     \end{array}\\
\end{array}\\
\]
Then substituting we find that for $\Gamma',{\bf x},y'$
the optimal solution, as it is for the same linear program (not the same data fitting problem), is at 
 $y'_j -\sum_{j=1}^m \alpha_j \Gamma'_j ({\bf x_j})=d$. This proves the required statement, as we can force the interpolator to be on only one side of the data, by changing the interpolating functions and data.
\item(B) {\bf Polynomials equioscillate}: We argue as follows. 
Let $t$ be the degree of the polynomial, that is the sum of $t+1$ monomials. Then at an optimal point   there are $t+2$ 
${\bf x}'s$ with maximal discrepancy $d$, let them be ${ z}_1\cdots {z}_{t+2}$, in increasing order, and the their respective values ($y$'s) be $q$'s. 
If our claim is not true 
then there are   ${z_j}$ and 
${z_{j+1}}$ so that there respective $q$'s are on the same side of the interpolant, (WLOG), $q_j-f({ z_j})=q_{j+1}-f({ z_{j+1}})=d$. 
We now choose a small  $\varepsilon>0$.  Then 
\begin{itemize}
    \item There is  a unique $m$-degree Lagrange polynomial interpolant $f'$ interpolating  the $t+1$ points, \\$({z_i},f(z_i))_{i\ne j,j+1} \union (z_j,f(z_j)+\varepsilon)$.
    \item $f'({z_{j+1}})\ge f({z_{j+1}})$ as
    $f'({z_{j+1}})- f({z_{j+1}}) =$
$\epsilon \prod\limits_{i=1,\, i\neq j,j+1}^{t+2} \frac{{z_{j+1}}-{z_i}}{{z_j}-{z_i}}$ 
which is positive as there is no ${z_i}$  between ${z_j}$ and ${z_{j+1}}$.
\item $f'$ interpolates all the points with discrepancy less or equal to $d$ and is exactly $d$ at $t$ points.
\end{itemize}

Let  $\delta >0$ and $f''$ be the Lagrange polynomial interpolating the $t+1 $ points  $(z_i,f'(z_i)+\delta*\text{sgn}(q_i-f'(z_i))$  where $i\neq j+1$. 
We are done as all points now have discrepancy less than $d$.
\end{itemize}

Similar but highly non trivial extensions of (A-B) hold in the weighted case. These are currently being studied by us together with replacing $\mathbb R^m$ by a more general metric space.

\end{document}